\documentclass[10pt,a4paper]{article}


\usepackage[utf8]{inputenc}

\usepackage{amsmath}
\usepackage{amsfonts}
\usepackage{amssymb}
\usepackage{mathabx}
\usepackage{bm}
\usepackage{makeidx}
\usepackage{hyperref}
\usepackage{cleveref}
\usepackage[usenames,dvipsnames]{color}
\usepackage{pagecolor,lipsum}

\usepackage{verbatim}

\usepackage{psfrag}
\usepackage{graphicx}
\usepackage{tikz}
\usetikzlibrary{patterns}

\usepackage{algorithmic}
\usepackage[ruled]{algorithm}

\usepackage{bm}
\usepackage{stmaryrd}


\newtheorem{lemma}{Lemma}[section]

\newtheorem{assumption}{Assumption}[section]

\newtheorem{gevp}{GEVP}[section]
\crefname{gevp}{GEVP}{GEVPs}

\newcommand{\R}{\mathbb R}


\newcommand{\HO}{H}
\newcommand{\HP}{H_D}
\newcommand{\RL}{{\mathcal R}}
\newcommand{\NC}{{k_0}}
\newcommand{\MC}{{k_1}}

\newcommand{\id}{{I_d}}


\newcommand{\lecnot}{\cite{Dolean:2015:IDDSiam} }

\newcommand{\Gw}{G_i^{\bot_{B_i}}}
\newcommand{\bw}{b_{G_i^{\bot_{B_i}}}}
\newcommand{\Bw}{B_{G_i^{\bot_{B_i}}}}

\newcommand{\QED}{\hspace*{\fill}\rule{2.5mm}{2.5mm}}
\newenvironment{proof}{{\bf Proof\ }}{\QED\\}

\usepackage{authblk}
\renewcommand{\id}{I}

\author[1]{N.~Bootland}
\author[2]{V.~Dolean}
\author[3]{F.~Nataf}
\author[3]{P.-H.~Tournier}

\affil[1]{\footnotesize Dept of Maths and Stats, Univ. of Strathclyde, UK}

\affil[2]{\footnotesize Dept of Maths and Stats, Univ. of Strathclyde, UK and LJAD, Univ. Côte d'Azur, France}

\affil[3]{\footnotesize Laboratoire J.L.~Lions, UPMC, CNRS UMR7598, Equipe LJLL-INRIA Alpines, Paris, France}

\title{Two-level DDM preconditioners for positive Maxwell equations}

\begin{document}

\maketitle

\setcounter{tocdepth}{5}
\tableofcontents
\pagenumbering{arabic}

\begin{abstract}
	In this paper we develop and analyse domain decomposition methods for linear systems of equations arising from conforming finite element discretisations of positive Maxwell-type equations. Convergence of domain decomposition methods rely heavily on the efficiency of the coarse space used in the second level. We design adaptive coarse spaces that complement the near-kernel space made of the gradient of scalar functions. This extends the results in~\cite{Hiptmair:2007:NSP} to the variable coefficient case and non-convex domains at the expense of a larger coarse space.
\end{abstract}

\section{Domain decomposition and preliminaries}
\label{sec:domain_decomposition_preliminaries}

\subsection{Domain decomposition}

Our notation follows that in the article on inexact coarse solves~\cite[Section 2 ``Basic definitions'']{Nataf:2020:MAR}, such as the definitions of $\NC$ and $\MC$. We will further rely on the fictitious space lemma, as given in \cite[Lemma 3.1]{Nataf:2020:MAR}, for example.

\subsection{Preliminaries}
\label{sub:preliminaries}

Let $A \in \R^{\#\mathcal{N}\times\#\mathcal{N}}$ be a symmetric positive definite matrix, stemming from the symmetric coercive bilinear form $a(\cdot,\cdot)$ on $\Omega$, and suppose it has a large near-kernel associated with the subspace $G \subset \R^{\#\mathcal{N}}$ (for instance, the gradient of $H^1$ functions on $\Omega$ when $a$ involves a curl operator). Define $G_i := R_i G$, where $R_i$ is the restriction to $\Omega_i$, and let $V_G \subset \R^{\#\mathcal{N}}$ be the vector space spanned by the sequence $(R_i^T D_i G_i)_{1\le i \le N}$, so that $G \subset V_G$. Further, let $Z \in \R^{\#\mathcal{N}_G\times \#\mathcal{N}}$ be a rectangular matrix whose columns are a basis of $V_G$ and $E$ the coarse space matrix defined by $E := Z^TAZ$.

For each subdomain $i$, let $b_i$ be a local symmetric coercive bilinear form giving rise to the matrices $B_i \in \R^{\#\mathcal{N}_i\times\#\mathcal{N}_i}$, corresponding to local subdomain solves. Within $\R^{\#\mathcal{N}_i}$ we define the $b_i$-orthogonal complement of $G_i$
\begin{equation}
\label{eq:Giortho}
\Gw := \left\lbrace \mathbf{U}_i \in \R^{\#\mathcal{N}_i} \ | \ \forall \, \mathbf{V}_i \in G_i, \ b_i(\mathbf{U}_i, \mathbf{V}_i ) = 0 \right\rbrace.
\end{equation}
It is this space that we will predominantly want to work in. As such, let $\bw$ denote the restriction of $b_i$ to $\Gw$ so that
\begin{align}
\label{eq:bw}
\bw &\colon \Gw \times \Gw \longrightarrow \R, & (\mathbf{U}_i, \mathbf{V}_i) &\mapsto b_i(\mathbf{U}_i, \mathbf{V}_i).
\end{align}
The Riesz representation theorem gives the existence of a unique isomorphism $\Bw \colon \Gw \longrightarrow \Gw$ into itself so that
\begin{align*}
\bw(\mathbf{U}_i,\mathbf{V}_i) &= (\Bw\,\mathbf{U}_i,\mathbf{V}_i) & \forall \, \mathbf{U}_i, \mathbf{V}_i &\in \Gw.
\end{align*}

An important tool that we will use is the $b_i$-orthogonal projection $\xi_{0i}$ from $\R^{\#\mathcal{N}_i}$ on $G_i$ parallel to $\Gw$. This also enables us to express the inverse of $\Bw$, which we will denote by $B_i^\dag$, through the following formula
\begin{align}
\label{eq:inverseBw}
B_i^\dag = (\id - \xi_{0i}) B_i^{-1}.
\end{align}
In order to check this formula, we have to show that
\begin{align}
\label{eq:inverseBwShow}
\Bw(\id - \xi_{0i})B_i^{-1} y= y
\end{align}
for all $y \in \Gw$. Let $z \in \Gw$, using the fact that $\id-\xi_{0i}$ is the $b_i$-orthogonal projection on $\Gw$, we have
\begin{align*}
(\Bw(\id - \xi_{0i}) B_i^{-1} y, z) = b_i((\id - \xi_{0i}) B_i^{-1} y, z) = b_i(B_i^{-1} y, z) = (y,z).
\end{align*}
Since this equality holds for any $z \in \Gw$, this proves that \eqref{eq:inverseBwShow} holds and thus that $B_i^\dag$ provides the inverse of $\Bw$.

\section{An additive Schwarz method}
\label{sec:additive_schwarz_method}

Within this section we consider the additive Schwarz method and how it can be suitably modified for our purposes. Our main premise is that the matrix $A$, defining the linear system we wish to solve, has a large near-kernel which can cause problems numerically. Thus we would like to deal with it directly in the coarse space in order that we can primarily work in the orthogonal complement, avoiding the ill-conditioning in $A$ from the near-kernel.

The additive Schwarz method is characterised by the choice of the subdomain matrices $B_i$ being given by the Dirichlet matrices $R_i A R_i^T$, along with a suitable choice of the operator $\RL$ in the fictitious space lemma.

\subsection{The underlying additive Schwarz method}
\label{sub:as}

We now define the abstract framework for the additive Schwarz preconditioner, where the choice $B_i = R_i A R_i^T$ is made. Let $\HP$ be defined as
\begin{align*}
\HP := \R^{\#\mathcal{N}_G} \times \prod_{i=1}^N \R^{\#\mathcal{N}_i}
\end{align*}
endowed with the Euclidean scalar product. For $\mathcal{U} = (\mathbf{U}_0, (\mathbf{U}_i)_{1 \le i \le N}) \in \HP$ and $\mathcal{V} = (\mathbf{V}_0, (\mathbf{V}_i)_{1 \le i \le N}) \in \HP$, with $\mathbf{U}_0,\mathbf{V}_0 \in \R^{\#\mathcal{N}_G}$ and $\mathbf{U}_i,\mathbf{V}_i \in\R^{\#\mathcal{N}_i}$ for $1 \le i \le N$, define the bilinear form $b \colon \HP \times \HP \longrightarrow \R$ arising from the coarse operator $E$ and the local SPD matrices $(B_i)_{1\le i \le N}$, where $B_i = R_iAR_i^T$, such that
\begin{align*}
(\mathcal{U},\mathcal{V}) \mapsto b(\mathcal{U},\mathcal{V}) := (E\mathbf{U}_0,\mathbf{V}_0) + \sum_{i=1}^{N} (R_i A R_i^T \mathbf{U}_i, \mathbf{V}_i).
\end{align*}
Further, we denote by $B \colon \HP \longrightarrow \HP$ the block diagonal operator such that $(B\mathcal{U},\mathcal{V}) = b(\mathcal{U},\mathcal{V})$ for all $\mathcal{U},\mathcal{V} \in \HP$. Now, using the $A$-orthogonal projection $P_0$ on $V_G$, for any $\mathcal{U} = (\mathbf{U}_0, (\mathbf{U}_i)_{1 \le i \le N}) \in \HP$ we define the linear operator $\RL_{AS} \colon \HP \longrightarrow \HO$ by
\begin{align*}
\RL_{AS}(\mathcal{U}) : = Z\mathbf{U}_0 + (\id-P_0) \sum_{i=1}^{N} R_i^T \mathbf{U}_i.
\end{align*}

In order to apply the fictitious space lemma three assumptions have to be checked.\\

$\bullet$ $\RL_{AS}$ is onto:\\
Let $\mathbf{U} \in \HO$, we have
\begin{align*}
\mathbf{U} &= P_0 \mathbf{U} + (\id - P_0) \mathbf{U} \\
&= P_0 \mathbf{U} + (\id - P_0) \textstyle\sum_{i=1}^N R_i^T D_i R_i \mathbf{U}.
\end{align*}
Now since $P_0 \mathbf{U} \in V_G$ there exists $\mathbf{U}_0$ such that $Z \mathbf{U}_0 = P_0 \mathbf{U}$. Therefore, we have
\begin{align*}
\mathbf{U} = \RL_{AS}(\mathbf{U}_0, (D_i R_i \mathbf{U})_{1\leq i\leq N}).
\end{align*}

$\bullet$ Continuity of $\RL_{AS}$:\\
We have to estimate a constant $c_R$ such that for all $\mathcal{U} = (\mathbf{U}_0, (\mathbf{U}_i)_{1 \le i \le N}) \in \HP$ we have
\begin{align*}
a( \RL_{AS} (\mathcal{U}), \RL_{AS} (\mathcal{U})) &\le c_R \, b(\mathcal{U}, \mathcal{U}) = c_R \, [ (E\mathbf{U}_0, \mathbf{U}_0) + \sum_{i=1}^N (R_i A R_i^T \mathbf{U}_i, \mathbf{U}_i)].
\end{align*}
Now we have the following estimate using the $A$-orthogonality of $\id-P_0$ and Lemma~7.9 in \lecnot (page~171):
\begin{align*}
a(\RL_{AS}(\mathcal{U}), \RL_{AS}(\mathcal{U})) &= \| Z \mathbf{U}_0 + (\id-P_0) \textstyle\sum_{i=1}^N R_i^T \mathbf{U}_i \|_A^2\\
&= \| Z \mathbf{U}_0 \|_A^2 + \| (\id-P_0) \textstyle\sum_{i=1}^N R_i^T \mathbf{U}_i \|_A^2\\
&\le (E \mathbf{U}_0, \mathbf{U}_0) + \NC \textstyle\sum_{i=1}^N \| R_i^T \mathbf{U}_i \|_A^2\\
&\le \NC \left[ (E \mathbf{U}_0, \mathbf{U}_0) + \textstyle\sum_{i=1}^N \| R_i^T \mathbf{U}_i \|_A^2 \right].
\end{align*}
Thus the estimate of the constant of continuity of $\RL_{AS}$ can be chosen as
\begin{align*}
c_R := \NC.
\end{align*}

$\bullet$ Stable decomposition with $\RL_{AS}$:\\
Let $\mathbf{U} \in \HO$, we have
\begin{align*}
\mathbf{U} &= P_0 \mathbf{U} + (\id - P_0) \mathbf{U} \\
&= P_0 \mathbf{U} + (\id - P_0) \textstyle\sum_{i=1}^N R_i^T D_i R_i \mathbf{U} \\
&= P_0 \mathbf{U} + (\id - P_0) \textstyle\sum_{i=1}^N R_i^T D_i (\id - \xi_{0i}) R_i \mathbf{U} + \underbrace{(\id - P_0) \textstyle\sum_{i=1}^N R_i^T D_i \xi_{0i} R_i \mathbf{U}}_{=0}.
\end{align*}
Let us consider the last equality: since $P_0 \mathbf{U} \in V_G$ there exists $\mathbf{U}_0$ such that $Z \mathbf{U}_0 = P_0 \mathbf{U}$, meanwhile the third term is zero since $\sum_{i=1}^N R_i^T D_i \xi_{0i} R_i \mathbf{U} \in V_G$. Therefore, we have
\begin{align*}
\mathbf{U} = \RL_{AS}(\mathbf{U}_0, (D_i (\id - \xi_{0i}) R_i \mathbf{U})_{1\leq i\leq N}).
\end{align*}
Determining the stability of this decomposition consists in estimating a constant $c_T > 0$ such that
\begin{align*}
c_T \, [(E\mathbf{U}_0, \mathbf{U}_0) + \sum_{j=1}^N (R_j A R_j^T D_j (\id-\xi_{0j}) R_j \mathbf{U}, D_j (\id-\xi_{0j}) R_j \mathbf{U})] \leq a(\mathbf{U}, \mathbf{U}).
\end{align*}
We have
\begin{align}
\nonumber
\textstyle\sum_{j=1}^N (R_j A R_j^T D_j (\id-\xi_{0j}) R_j \mathbf{U}, D_j (\id-\xi_{0j}) R_j \mathbf{U}) &\le \tau_0 \textstyle\sum_{j=1}^N (A_j^{Neu} R_j \mathbf{U}, R_j \mathbf{U})\\
\label{eq:estimateAS}
&\le \tau_0 \MC \, a(\mathbf{U}, \mathbf{U}),
\end{align}
where
\begin{align}
\label{eq:tauOneLevelAS}
\tau_0 := \max_{1 \le j \le N} \max_{\mathbf{V}_j \in \R^{\#\mathcal{N}_j}} \frac{(R_j A R_j^T D_j (\id-\xi_{0j}) \mathbf{V}_j,  D_j (\id-\xi_{0j}) \mathbf{V}_j)}{(A_j^{Neu} \mathbf{V}_j,  \mathbf{V}_j)},
\end{align}
and in the second step we have used Lemma~7.13 in \lecnot (page~175). By applying \eqref{eq:estimateAS}, we obtain
\begin{align*}
& (E\mathbf{U}_0, \mathbf{U}_0) + \textstyle\sum_{j=1}^N (R_j A R_j^T D_j (\id-\xi_{0j}) R_j \mathbf{U},  D_j (\id-\xi_{0j}) R_j \mathbf{U})\\
&\leq a(P_0\mathbf{U}, P_0\mathbf{U}) + \tau_0 \MC \, a(\mathbf{U}, \mathbf{U})\\
&\le (1 + \MC \tau_0) \, a(\mathbf{U}, \mathbf{U}).
\end{align*}
Thus we can take
\begin{align*}
c_T := \frac{1}{1 + \MC \tau_0}.
\end{align*}
Finally, the condition number estimate given by the fictitious space lemma for the induced preconditioner is
\begin{align*}
\kappa(M_{AS}^{-1} A) \le (1 + \MC \tau_0) \NC.
\end{align*}
The development that now follows is motivated by the fact that the number $\tau_0$ may be very large due to the shape and size of the domain or the heterogeneities in the coefficients of the problem. This leads to a bad condition number estimate. The fix is to enlarge the coarse space by the generalized eigenvalue problem (GEVP) induced by its formula~\eqref{eq:tauOneLevelAS}. More precisely, we introduce in the next subsection a generalized eigenvalue problem and the related GenEO coarse space.

\subsection{Additive Schwarz with GenEO}
\label{sub:as_geneo}

\begin{gevp}[Generalized Eigenvalue Problem for the lower bound]
\label{gevp:tauthresholdAS}
For each subdomain $1 \le j \le N$, we introduce the generalized eigenvalue problem
\begin{align}
\label{eq:eigAtildeBAS}
\begin{array}{c}
\text{Find } (\mathbf{V}_{jk},\lambda_{jk}) \in \R^{\#\mathcal{N}_j} \setminus \{0\} \times \R
\mbox{ such that}\\[1ex]
(\id-\xi_{0j}^T) D_j R_j A R_j^T D_j (\id-\xi_{0j}) \mathbf{V}_{jk} = \lambda_{jk} A_j^{Neu} \mathbf{V}_{jk}.
\end{array}
\end{align}
Let $\tau>0$ be a user-defined threshold, we define $V_{j,geneo}^\tau \subset \R^{\#\mathcal{N}}$ as the vector space spanned by the family of vectors $(R_j^T D_j (\id - \xi_{0j}) \mathbf{V}_{jk})_{\lambda_{jk} > \tau}$ corresponding to eigenvalues larger than $\tau$. Let $V_{geneo}^\tau$ be the vector space spanned by the collection over all subdomains of vector spaces $(V_{j,geneo}^\tau)_{1 \le j \le N}$.
\end{gevp}

In the theory that follows for the stable decomposition estimate but not in the algorithm itself, we will make use of $\pi_j$ defined as the projection from $R^{\#\mathcal{N}_j}$ on $V_{j,\tau} := \text{Span}\{ \mathbf{V}_{jk} |\, \lambda_{jk} > \tau \}$ parallel to $\text{Span}\{ \mathbf{V}_{jk} |\, \lambda_{jk} \le \tau \}$. The key to \Cref{gevp:tauthresholdAS} is the following bound, derived from Lemma~7.7 in \cite{Dolean:2015:IDDSiam} (page~168):  for all $\mathbf{U}_j \in R^{\#\mathcal{N}_j}$
\begin{align}
\label{eq:gevpLowerBoundAS}
(R_j A R_j^T D_j (\id-\xi_{0j}) (\id-\pi_j) \mathbf{U}_j, D_j (\id-\xi_{0j}) (\id-\pi_j) \mathbf{U}_j) &\le \tau\, (A_j^{Neu} \mathbf{U}_j, \mathbf{U}_j).
\end{align}

We can now build the coarse space $V_0$ from the near-kernel $G$ along with \Cref{gevp:tauthresholdAS}, defining the following vector space sum:
\begin{align}
\label{eq:V0GenEOAS}
V_0 := V_G + V_{geneo}^\tau.
\end{align}
The coarse space $V_0$ is spanned by the columns of a full rank rectangular matrix with $\#\mathcal{N}_0$ columns, which we will denote by $Z$.

We can now define the abstract framework for the additive Schwarz with GenEO preconditioner. Let $\HP$ be defined by
\begin{align*}
\HP := \R^{\#\mathcal{N}_0} \times \prod_{i=1}^N \R^{\#\mathcal{N}_i}
\end{align*}
endowed with the Euclidean scalar product. We now define the bilinear form $b \colon \HP \times \HP \longrightarrow \R$ arising from the coarse operator $E = Z^T A Z$ and the local SPD matrices $(B_i)_{1\le i \le N}$, where $B_i = R_iAR_i^T$, such that
\begin{align*}
(\mathcal{U},\mathcal{V}) \mapsto b(\mathcal{U},\mathcal{V}) := (E\mathbf{U}_0,\mathbf{V}_0) + \sum_{i=1}^{N} (R_i A R_i^T \mathbf{U}_i, \mathbf{V}_i).
\end{align*}
Further, we denote by $B \colon \HP \longrightarrow \HP$ the block diagonal operator such that $(B\mathcal{U},\mathcal{V}) = b(\mathcal{U},\mathcal{V})$ for all $\mathcal{U},\mathcal{V} \in \HP$. Using the $A$-orthogonal projection $P_0$ on $V_0$, we define $\RL_{AS,2} \colon \HP \longrightarrow \HO$ as
\begin{align*}
\RL_{AS,2}(\mathcal{U}) := Z \mathbf{U}_0 + (\id - P_0) \sum_{i=1}^N R_i^T \mathbf{U}_i.
\end{align*}

In order to apply the fictitious space lemma we check the three assumptions as before. Surjectivity and continuity of $\RL_{AS,2}$ follows identically to that for $\RL_{AS}$ as in \Cref{sub:as}. Thus we need only consider the stable decomposition.

$\bullet$ Stable decomposition with $\RL_{AS,2}$:\\
Let $\mathbf{U} \in \HO$, we have
\begin{align*}
\mathbf{U} &= P_0 \mathbf{U} + (\id - P_0) \textstyle\sum_{i=1}^N R_i^T D_i R_i \mathbf{U} \\
&= P_0 \mathbf{U} + (\id - P_0) \textstyle\sum_{i=1}^N R_i^T D_i (\id - \xi_{0i}) R_i \mathbf{U} + \underbrace{(\id - P_0) \textstyle\sum_{i=1}^N R_i^T D_i \xi_{0i} R_i \mathbf{U}}_{=0}\\
&= P_0 \mathbf{U} + (\id - P_0) \textstyle\sum_{i=1}^N R_i^T D_i (\id - \xi_{0i}) (\id - \pi_i) R_i \mathbf{U}\\
& \mathrel{\phantom{=}} \mathrel+ \underbrace{(\id - P_0) \textstyle\sum_{i=1}^N R_i^T D_i (\id - \xi_{0i}) \pi_i R_i \mathbf{U}}_{=0}.
\end{align*}
The very last term is zero since for all $i$, $R_i^T D_i (\id - \xi_{0i}) \pi_i R_i \mathbf{U} \in V_{geneo}^\tau \subset V_0$. Let $\mathbf{U}_0 \in \R^{\#\mathcal{N}_0}$ be such that $Z \mathbf{U}_0 = P_0\mathbf{U}$, then we can choose the decomposition
\begin{align*}
\mathbf{U} = \RL_{AS,2}(\mathbf{U}_0, (D_i (\id - \xi_{0i}) (\id - \pi_i) R_i \mathbf{U})_{1 \leq i \leq N}).
\end{align*}
With this decomposition, using the GEVP bound \eqref{eq:gevpLowerBoundAS} and Lemma~7.13 in \lecnot (page~175), we have
\begin{align*}
& (E\mathbf{U}_0, \mathbf{U}_0) + \textstyle\sum_{j=1}^N (R_j A R_j^T D_j (\id-\xi_{0j}) (\id - \pi_j) R_j \mathbf{U},  D_j (\id-\xi_{0j}) (\id - \pi_j) R_j \mathbf{U})\\
&\le a(Z\mathbf{U}_0, Z\mathbf{U}_0) + \tau \textstyle\sum_{j=1}^N (A_j^{Neu} R_j \mathbf{U}, R_j \mathbf{U})\\
&\le a(P_0\mathbf{U}, P_0\mathbf{U}) + \tau \MC \, a(\mathbf{U}, \mathbf{U})\\
&\le (1 + \MC \tau) \, a(\mathbf{U}, \mathbf{U}).
\end{align*}
Thus the stable decomposition property holds with constant
\begin{align*}
c_T := \frac{1}{1 + \MC \tau}.
\end{align*}

Altogether, for a given user-defined positive constant $\tau$, the additive Schwarz with GenEO preconditioner yields the condition number estimate
\begin{align*}
\kappa(M^{-1}_{AS,2} A) \le (1 + \MC \tau) \NC,
\end{align*}
where
\begin{align*}
M^{-1}_{AS,2} &= \RL_{AS,2} B^{-1} \RL_{AS,2}^* \\
&= Z (Z^T A Z)^{-1} Z^T + (\id-P_0) \sum_{i=1}^N R_i^T B_i^{-1} R_i (\id-P_0^T) \\
&= Z (Z^T A Z)^{-1} Z^T + (\id-P_0) \sum_{i=1}^N R_i^T (R_i A R_i^T)^{-1} R_i (\id-P_0^T).
\end{align*}

\subsection{Inexact coarse solves}
\label{sub:as_geneo_with_inexact_coarse_solves}

In practice, the coarse operator $E = Z^T A Z$ will be large, as it involves both the near-kernel and the GenEO coarse space, and thus solving the linear systems involving $E$ presents a bottleneck within the two-level algorithm. As such, we now consider the use of an inexact coarse solve, that is when we replace $E$ by a cheaper approximation
\begin{align*}
\tilde{E} \approx E,
\end{align*}
in order to ameliorate this factor. We are then interested in the robustness of the approach when inexact coarse solves are employed.

We make the following assumption on $\tilde{E}$ throughout:
\begin{assumption}
\label{ass:spd_tildeE}
The operator $\tilde{E}$ is symmetric positive definite.
\end{assumption}
Let us also introduce the operator $\tilde{P}_0$
\begin{align*}
\tilde{P}_0 := Z \tilde{E}^{-1} Z^T A,
\end{align*}
which approximates the $A$-orthogonal projection $P_0$ on $V_0$
\begin{align*}
P_0 = Z E^{-1} Z^T A.
\end{align*}
Note that $\tilde{P}_0$ is not a projection but, from Lemma~1 of \cite{Nataf:2020:MAR}, has the same range and kernel as $P_0$.

We now consider the additive Schwarz with GenEO preconditioner with inexact coarse solves. We utilise the same framework as in \Cref{sub:as_geneo} but now make use of the bilinear form $\tilde{b} \colon \HP \times \HP \longrightarrow \R$, with block diagonal matrix form $\tilde{B}$, such that
\begin{align*}
\tilde{b}(\mathcal{U},\mathcal{V}) := (\tilde{E}\mathbf{U}_0, \mathbf{V}_0) + \sum_{i=1}^{N} (R_i A R_i^T \mathbf{U}_i, \mathbf{V}_i),
\end{align*}
and consider the linear operator $\widetilde{\RL}_{AS,2} \colon \HP \longrightarrow \HO$ defined by
\begin{align*}
\widetilde{\RL}_{AS,2}(\mathcal{U}) : = Z\mathbf{U}_0 + (\id-\tilde{P}_0) \sum_{i=1}^{N} R_i^T \mathbf{U}_i.
\end{align*}
Note that
\begin{align*}
\widetilde{\RL}_{AS,2}(\mathcal{U}) - \RL_{AS,2}(\mathcal{U}) = (P_0-\tilde{P}_0) \sum_{i=1}^{N} R_i^T \mathbf{U}_i \in V_0
\end{align*}
since $\mathrm{Im}(P_0-\tilde{P}_0) \subset V_0$.

Before continuing, we give a lemma which will prove useful in what follows.

\begin{lemma}
\label{lemma:EandtildeE}
Let $\mathbf{U}_0 \in \R^{\#{\mathcal N}_0}$, then
\begin{align}
\label{eq:tildeEBoundedByE}
(\tilde{E} {\mathbf U}_0,{\mathbf U}_0) &\le \lambda_{max}(E^{-1} \tilde{E}) \| Z\mathbf{U}_0 \|_{A}^{2}
\end{align}
and
\begin{align}
\label{eq:EBoundedBytildeE}
\| Z\mathbf{U}_0 \|_{A}^{2} &\le \lambda_{max}(E\tilde{E}^{-1}) ( \tilde{E}\mathbf{U}_0, \mathbf{U}_0 ).
\end{align}
\end{lemma}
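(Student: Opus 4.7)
The plan is to observe that $\|Z\mathbf{U}_0\|_A^2 = (AZ\mathbf{U}_0, Z\mathbf{U}_0) = (Z^TAZ\mathbf{U}_0, \mathbf{U}_0) = (E\mathbf{U}_0, \mathbf{U}_0)$, so both inequalities are really statements comparing the quadratic forms $(E\,\cdot,\cdot)$ and $(\tilde{E}\,\cdot,\cdot)$ on $\R^{\#\mathcal{N}_0}$. Once this is recognised, each inequality is a standard Rayleigh quotient bound for the generalized eigenvalue problem $\tilde{E}\mathbf{V} = \lambda E\mathbf{V}$ (for the first) or $E\mathbf{V} = \mu \tilde{E}\mathbf{V}$ (for the second), both of which are well-posed since $E$ and $\tilde{E}$ are SPD by construction and by Assumption~\ref{ass:spd_tildeE}.

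For the first inequality, I would substitute $\mathbf{W} = E^{1/2}\mathbf{U}_0$ (which is well-defined since $E$ is SPD) and rewrite
\begin{align*}
\frac{(\tilde{E}\mathbf{U}_0, \mathbf{U}_0)}{(E\mathbf{U}_0,\mathbf{U}_0)} = \frac{(E^{-1/2}\tilde{E}E^{-1/2}\mathbf{W}, \mathbf{W})}{(\mathbf{W},\mathbf{W})} \le \lambda_{\max}(E^{-1/2}\tilde{E}E^{-1/2}) = \lambda_{\max}(E^{-1}\tilde{E}),
\end{align*}
the last equality following because $E^{-1/2}\tilde{E}E^{-1/2}$ and $E^{-1}\tilde{E}$ are similar matrices, and then multiply through by $(E\mathbf{U}_0,\mathbf{U}_0) = \|Z\mathbf{U}_0\|_A^2$.

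For the second inequality, the symmetric argument applies: set $\mathbf{W} = \tilde{E}^{1/2}\mathbf{U}_0$ and compute
\begin{align*}
\frac{(E\mathbf{U}_0, \mathbf{U}_0)}{(\tilde{E}\mathbf{U}_0,\mathbf{U}_0)} = \frac{(\tilde{E}^{-1/2}E\tilde{E}^{-1/2}\mathbf{W},\mathbf{W})}{(\mathbf{W},\mathbf{W})} \le \lambda_{\max}(\tilde{E}^{-1/2}E\tilde{E}^{-1/2}) = \lambda_{\max}(\tilde{E}^{-1}E) = \lambda_{\max}(E\tilde{E}^{-1}),
\end{align*}
where the last equality uses the fact that $\tilde{E}^{-1}E$ and $E\tilde{E}^{-1}$ share the same spectrum. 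Rearranging gives the claim.

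There is no real obstacle here; the only small subtlety is keeping straight that $E^{-1}\tilde{E}$ (which is not symmetric in general) nonetheless has real positive spectrum equal to that of the symmetric matrix $E^{-1/2}\tilde{E}E^{-1/2}$, so that $\lambda_{\max}(E^{-1}\tilde{E})$ is a well-defined positive real number, and similarly for $\lambda_{\max}(E\tilde{E}^{-1})$. Both facts follow from SPD-ness of $E$ and $\tilde{E}$. The proof is therefore a short spectral calculation rather than anything conceptual.
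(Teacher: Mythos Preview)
Your proposal is correct and follows essentially the same approach as the paper: both arguments reduce to the Rayleigh quotient bound via the substitution $\mathbf{W}=E^{1/2}\mathbf{U}_0$ (or $\tilde{E}^{1/2}\mathbf{U}_0$) and the similarity of $E^{-1/2}\tilde{E}E^{-1/2}$ with $E^{-1}\tilde{E}$. The only cosmetic difference is that for \eqref{eq:EBoundedBytildeE} the paper reuses the $E^{1/2}$ substitution, bounds below by $\lambda_{min}(E^{-1}\tilde{E})$, and then inverts via $\lambda_{min}(E^{-1}\tilde{E})^{-1}=\lambda_{max}(E\tilde{E}^{-1})$, whereas you symmetrise directly with $\tilde{E}^{1/2}$; both routes are equivalent.
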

\begin{proof}
First, for \eqref{eq:tildeEBoundedByE} we have that
\begin{align*}
(\tilde{E} {\mathbf U}_0,{\mathbf U}_0) &= (E^{-1/2} \tilde{E} E^{-1/2} E^{1/2} {\mathbf U}_0, E^{1/2} {\mathbf U}_0)\\
&\le \lambda_{max}(E^{-1/2} \tilde{E} E^{-1/2}) (E^{1/2} {\mathbf U}_0, E^{1/2} {\mathbf U}_0)\\
&= \lambda_{max}(E^{-1} \tilde{E}) (Z^T A Z {\mathbf U}_0, {\mathbf U}_0)\\
&= \lambda_{max}(E^{-1} \tilde{E}) \|Z\mathbf{U}_0\|_{A}^2.
\end{align*}
In a similar manner
\begin{align*}
(\tilde{E} {\mathbf U}_0,{\mathbf U}_0) &= (E^{-1/2} \tilde{E} E^{-1/2} E^{1/2} {\mathbf U}_0, E^{1/2} {\mathbf U}_0)\\
&\ge \lambda_{min}(E^{-1/2} \tilde{E} E^{-1/2}) (E^{1/2} {\mathbf U}_0, E^{1/2} {\mathbf U}_0)\\
&= \lambda_{min}(E^{-1} \tilde{E}) \|Z\mathbf{U}_0\|_{A}^2
\end{align*}
and thus we deduce \eqref{eq:EBoundedBytildeE} from the equivalence
\begin{align*}
\lambda_{min}(E^{-1} \tilde{E})^{-1} = \lambda_{max}(\tilde{E}^{-1} E) = \lambda_{max}(E \tilde{E}^{-1}),
\end{align*}
the final equality arising since $\tilde{E}^{-1} E$ and $E \tilde{E}^{-1}$ are similar matrices.
\end{proof}

Predominantly following the arguments presented in \cite{Nataf:2020:MAR}, we now check the three assumptions of the fictitious space lemma.

$\bullet$ $\widetilde{\RL}_{AS,2}$ is onto:\\
For $\mathbf{U} \in \HO$ we have that
\begin{align*}
\mathbf{U} &= \tilde{P}_0 \mathbf{U} + (\id - \tilde{P}_0) \mathbf{U}\\
&= \tilde{P}_0 \mathbf{U} + (\id-\tilde{P}_0) \textstyle\sum_{i=1}^{N} R_i^T D_i R_i \mathbf{U}.
\end{align*}
Let $\mathbf{U}_0 \in \R^{\#{\mathcal N}_0}$ be such that $Z \mathbf{U}_0 = \tilde{P}_0\mathbf{U}$, then we have the decomposition
\begin{align*}
\mathbf{U} = \widetilde{\RL}_{AS,2}(\mathbf{U}_0,(D_i R_i \mathbf{U})_{1 \le i \le N}).
\end{align*}

$\bullet$ Continuity of $\widetilde{\RL}_{AS,2}$:\\
Let $\delta>0$ be a positive parameter. For $\mathcal{U} = (\mathbf{U}_0, (\mathbf{U}_i)_{1 \le i \le N}) \in \HP$, by using the fact that $\mathrm{Im}(P_0-\tilde{P}_0)$ is $a$-orthogonal to $\mathrm{Im}(\id-P_0)$, the Cauchy--Schwarz inequality, Young's inequality (with parameter $\delta$), $A$-orthogonality of $I-P_0$, the bound \eqref{eq:EBoundedBytildeE} from \Cref{lemma:EandtildeE}, and Lemma~7.9 of \lecnot (page~171) we have
\begin{align*}
& a(\widetilde{\RL}_{AS,2}(\mathcal{U}),\widetilde{\RL}_{AS,2}(\mathcal{U}))\\
&= \| \RL_{AS,2}(\mathcal{U}) + (P_0-\tilde{P}_0) \textstyle\sum_{i=1}^{N} R_i^T \mathbf{U}_i \|_{A}^{2}\\
&= \| \RL_{AS,2}(\mathcal{U}) \|_{A}^{2} + 2 \, a(\RL_{AS,2}(\mathcal{U}),(P_0-\tilde{P}_0) \textstyle\sum_{i=1}^{N} R_i^T \mathbf{U}_i))\\
& \mathrel{\phantom{=}} \mathrel+ \| (P_0-\tilde{P}_0) \textstyle\sum_{i=1}^{N} R_i^T \mathbf{U}_i \|_{A}^{2}\\
&= \| Z {\mathbf U}_0 \|_{A}^{2} + \| (\id-P_0) \textstyle\sum_{i=1}^N R_i^T {\mathbf U}_i \|_{A}^{2} + 2 \, a(Z\mathbf{U}_0,(P_0-\tilde{P}_0) \textstyle\sum_{i=1}^{N} R_i^T \mathbf{U}_i)\\
& \mathrel{\phantom{=}} \mathrel+ \| (P_0-\tilde{P}_0) \textstyle\sum_{i=1}^{N} R_i^T \mathbf{U}_i \|_{A}^{2}\\
&\le \| Z\mathbf{U}_0 \|_{A}^{2} + \| \textstyle\sum_{i=1}^N R_i^T {\mathbf U}_i \|_{A}^{2} + \delta \| Z\mathbf{U}_0 \|_{A}^{2} + \tfrac{1}{\delta} \| (P_0-\tilde{P}_0) \textstyle\sum_{i=1}^{N} R_i^T \mathbf{U}_i \|_{A}^{2})\\
& \mathrel{\phantom{=}} \mathrel+ \| (P_0-\tilde{P}_0) \textstyle\sum_{i=1}^{N} R_i^T \mathbf{U}_i \|_{A}^{2}\\
&\le (1+\delta) \| Z\mathbf{U}_0 \|_{A}^{2} + \left(1 + (1+\tfrac{1}{\delta}) \| P_0-\tilde{P}_0 \|_{A}^{2} \right) \| \textstyle\sum_{i=1}^{N} R_i^T \mathbf{U}_i \|_{A}^{2}\\
&\le (1+\delta) \lambda_{max}(E\tilde{E}^{-1}) ( \tilde{E}\mathbf{U}_0, \mathbf{U}_0 ) + \left(1 + (1+\tfrac{1}{\delta}) \| P_0-\tilde{P}_0 \|_{A}^{2} \right) \NC \textstyle\sum_{i=1}^{N} \| R_i^T \mathbf{U}_i \|_{A}^{2}\\
&\le c_R \, \tilde{b}(\mathcal{U},\mathcal{U}),
\end{align*}
for
\begin{align*}
c_R = \max\left((1+\delta) \lambda_{max}(E\tilde{E}^{-1}), \left(1 + (1+\tfrac{1}{\delta}) \| P_0-\tilde{P}_0 \|_{A}^{2} \right) \NC \right).
\end{align*}
As in \cite{Nataf:2020:MAR}, we can minimise over $\delta>0$ by equating the two terms using
\begin{align*}
\min_{\delta>0} \max(c+\alpha\delta,d+\beta\delta^{-1}) = \frac{d+c + \sqrt{(d-c)^2 + 4\alpha\beta}}{2},
\end{align*}
when all parameters are positive. Let us define $\epsilon_{A} := \| P_0-\tilde{P}_0 \|_{A}$ along with $\lambda_{+} := \lambda_{max}(E\tilde{E}^{-1})$, then we can take
\begin{align}
\label{eq:c_R_inexact_as}
c_R = \frac{\NC (1+\epsilon_{A}^2) + \lambda_{+} + \sqrt{(\NC (1+\epsilon_{A}^2)-\lambda_{+})^2 + 4 \lambda_{+} \NC \epsilon_{A}^2}}{2}.
\end{align}
Further note that Lemma~4 from \cite{Nataf:2020:MAR} gives that
\begin{align*}
\epsilon_{A} = \max\left(|1-\lambda_{min}(E\tilde{E}^{-1})|,|1-\lambda_{max}(E\tilde{E}^{-1})|\right),
\end{align*}
which allows for \eqref{eq:c_R_inexact_as} to be given solely in terms of the constant $\NC$ and the minimal and maximal eigenvalues of $E\tilde{E}^{-1}$.

$\bullet$ Stable decomposition with $\widetilde{\RL}_{AS,2}$:\\
For $\mathbf{U} \in \HO$ we have that
\begin{align*}
\mathbf{U} &= P_0 \mathbf{U} + (\id - P_0) \mathbf{U} = P_0 \mathbf{U} + (\id-P_0) \textstyle\sum_{i=1}^{N} R_i^T D_i R_i \mathbf{U}\\
&= P_0 \mathbf{U} + (\id-P_0) \textstyle\sum_{i=1}^{N} R_i^T D_i (\id-\xi_{0i}) R_i \mathbf{U}\\
&= P_0 \mathbf{U} + (\id-P_0) \textstyle\sum_{i=1}^{N} R_i^T D_i (\id-\xi_{0i}) (\id-\pi_i) R_i \mathbf{U}\\
&= F \mathbf{U} + (\id-\tilde{P}_0) \textstyle\sum_{i=1}^{N} R_i^T D_i (\id-\xi_{0i}) (\id-\pi_i) R_i \mathbf{U},
\end{align*}
where
\begin{align}
\label{eq:FU}
F \mathbf{U} = P_0 \mathbf{U} + (\tilde{P}_0-P_0) \textstyle\sum_{i=1}^{N} R_i^T D_i (\id-\xi_{0i}) (\id-\pi_i) R_i \mathbf{U} \in V_0.
\end{align}
Let $\mathbf{U}_0 \in \R^{\#{\mathcal N}_0}$ be such that $Z \mathbf{U}_0 = F\mathbf{U}$, then we have the decomposition
\begin{align*}
\mathbf{U} = \widetilde{\RL}_{AS,2}(\mathbf{U}_0,(D_i (\id-\xi_{0i}) (\id-\pi_i) R_i \mathbf{U})_{1 \le i \le N}) =: \widetilde{\RL}_{AS,2}(\mathcal{U}).
\end{align*}
We now show that this decomposition is stable, again following an analogous approach to \cite{Nataf:2020:MAR}. Firstly, note that the GEVP bound \eqref{eq:gevpLowerBoundAS} and Lemma~7.13 in \lecnot (page~175) provides the bound
\begin{align}
\begin{split}
\label{eq:kuninequalityAS2}
& \textstyle\sum_{i=1}^N (R_i A R_i^T D_i (\id-\xi_{0i}) (\id - \pi_i) R_i \mathbf{U},  D_i (\id-\xi_{0i}) (\id - \pi_i) R_i \mathbf{U})\\
&= \textstyle\sum_{i=1}^N \| R_i^T D_i (\id-\xi_{0i}) (\id - \pi_i) R_i \mathbf{U}\|_A^2 \le \tau \MC \, a(\mathbf{U}, \mathbf{U}).
\end{split}
\end{align}
The remaining term in $\tilde{b}(\mathcal{U},\mathcal{U})$ corresponds to the coarse operator $\tilde{E}$ where, using the bound \eqref{eq:tildeEBoundedByE} of \Cref{lemma:EandtildeE}, we have
\begin{align*}
(\tilde{E} {\mathbf U}_0,{\mathbf U}_0) &\le \lambda_{max}(E^{-1} \tilde{E}) \| Z\mathbf{U}_0 \|_{A}^{2} = \lambda_{max}(E^{-1} \tilde{E}) \|F\mathbf{U}\|_{A}^2.
\end{align*}
Now from \eqref{eq:FU}, letting $\delta>0$ be a positive parameter, making use of the Cauchy--Schwarz inequality, Young's inequality (with parameter $\delta$), Lemma~7.9 of \lecnot (page~171) and \eqref{eq:kuninequalityAS2} gives
\begin{align*}
\|F\mathbf{U}\|_{A}^2 &\le \| P_0 \mathbf{U} \|_{A}^{2} + 2a(P_0 \mathbf{U},(\tilde{P}_0-P_0) \textstyle\sum_{i=1}^{N} R_i^T D_i (\id-\xi_{0i}) (\id-\pi_i) R_i \mathbf{U})\\
& \mathrel{\phantom{=}} \mathrel+ \| (\tilde{P}_0-P_0) \textstyle\sum_{i=1}^{N} R_i^T D_i (\id-\xi_{0i}) (\id-\pi_i) R_i \mathbf{U} \|_{A}^{2}\\
&\le (1+\delta)\| P_0 \mathbf{U} \|_{A}^{2} + (1+\tfrac{1}{\delta})\| (\tilde{P}_0-P_0) \textstyle\sum_{i=1}^{N} R_i^T D_i (\id-\xi_{0i}) (\id-\pi_i) R_i \mathbf{U} \|_{A}^{2}\\
&\le (1+\delta) a(\mathbf{U},\mathbf{U}) + (1+\tfrac{1}{\delta}) \epsilon_{A}^{2} \| \textstyle\sum_{i=1}^{N} R_i^T D_i (\id-\xi_{0i}) (\id-\pi_i) R_i \mathbf{U} \|_{A}^{2}\\
&\le (1+\delta) a(\mathbf{U},\mathbf{U}) + (1+\tfrac{1}{\delta}) \epsilon_{A}^{2} \NC \textstyle\sum_{i=1}^{N} \| R_i^T D_i (\id-\xi_{0i}) (\id-\pi_i) R_i \mathbf{U} \|_{A}^{2}\\
&\le \left( (1+\delta) + (1+\tfrac{1}{\delta}) \epsilon_{A}^{2} \NC \MC \tau \right) a(\mathbf{U},\mathbf{U}).
\end{align*}
We can minimise over the parameter $\delta$, yielding $\delta = \epsilon_{A}\sqrt{\NC\MC\tau}$, and thus
\begin{align*}
(\tilde{E} {\mathbf U}_0,{\mathbf U}_0) &\le \lambda_{max}(E^{-1} \tilde{E}) (1+\epsilon_{A}\sqrt{\NC\MC\tau})^2 a(\mathbf{U},\mathbf{U}).
\end{align*}
Combining this estimate with \eqref{eq:kuninequalityAS2} gives $\tilde{b}(\mathcal{U},\mathcal{U}) \le c_T^{-1} \, a(\mathbf{U}, \mathbf{U})$ where
\begin{align}
\nonumber
c_T &= \frac{1}{\MC\tau + \lambda_{max}(E^{-1} \tilde{E}) (1+\epsilon_{A}\sqrt{\NC\MC\tau})^2}\\
\label{eq:c_T_inexact_as}
&= \frac{\lambda_{min}(E\tilde{E}^{-1})}{(1+\epsilon_{A}\sqrt{\NC\MC\tau})^2 + \lambda_{min}(E\tilde{E}^{-1})\MC\tau}.
\end{align}
Thus we see that the constant in \eqref{eq:c_T_inexact_as} is given solely in terms of the constants $\NC$, $\MC$ and $\tau$, and the minimal and maximal eigenvalues of $E\tilde{E}^{-1}$.

Altogether, the fictitious space lemma provides the following spectral bounds
\begin{align*}
c_T \, a(\mathbf{U},\mathbf{U}) \le a\left( \tilde{M}_{AS,2}^{-1} A \mathbf{U}, \mathbf{U} \right) \le c_R \, a(\mathbf{U},\mathbf{U})
\end{align*}
for all $\mathbf{U} \in \HO = \R^{\#{\mathcal N}}$, where $c_R$ and $c_T$ are given by \eqref{eq:c_R_inexact_as} and \eqref{eq:c_T_inexact_as} respectively, and
\begin{align*}
\tilde{M}_{AS,2}^{-1} &= \widetilde{\RL}_{AS,2} \tilde{B}^{-1} \widetilde{\RL}_{AS,2}^*\\
&= Z \tilde{E}^{-1} Z^T + (\id-\tilde{P}_0) \sum_{i=1}^N R_i^T B_i^{-1} R_i (\id-\tilde{P}_0^T)\\
&= \tilde{P}_0 A^{-1} + (\id-\tilde{P}_0) \sum_{i=1}^N R_i^T (R_i A R_i^T)^{-1} R_i (\id-\tilde{P}_0^T).
\end{align*}

\section{A SORAS method}
\label{sec:soras_method}

In this section we address the symmetrised optimised restricted additive Schwarz (SORAS) method. Here the symmetric positive definite subdomain matrices $B_i$ stem from using e.g.~Neumann matrices or optimised transmission conditions rather than the Dirichlet matrices $R_i A R_i^T$ within the additive Schwarz method. In addition, the partition of unity matrices $D_i$ are used symmetrically within the method in order that we can apply the present theory. As with the additive Schwarz method of the previous section, we utilise a coarse space including $V_G$ to ameliorate the near-kernel $G$.

\subsection{The underlying SORAS method}
\label{sub:soras}

We now define the abstract framework for the SORAS preconditioner. Let $\HP$ be defined as
\begin{align*}
\HP := \R^{\#\mathcal{N}_G} \times \prod_{i=1}^N \Gw
\end{align*}
endowed with the Euclidean scalar product. For $\mathcal{U} = (\mathbf{U}_0, (\mathbf{U}_i)_{1 \le i \le N}) \in \HP$ and $\mathcal{V} = (\mathbf{V}_0, (\mathbf{V}_i)_{1 \le i \le N}) \in \HP$, with $\mathbf{U}_0,\mathbf{V}_0 \in \R^{\#\mathcal{N}_G}$ and $\mathbf{U}_i,\mathbf{V}_i \in \Gw$ for $1 \le i \le N$, define the bilinear form $b \colon \HP \times \HP \longrightarrow \R$ arising from the coarse operator $E$ and the local SPD matrices $(B_i)_{1\le i \le N}$ such that
\begin{align*}
(\mathcal{U},\mathcal{V}) \mapsto b(\mathcal{U},\mathcal{V}) := (E\mathbf{U}_0,\mathbf{V}_0) + \sum_{i=1}^{N} (B_i \mathbf{U}_i, \mathbf{V}_i).
\end{align*}
Further, we denote by $B \colon \HP \longrightarrow \HP$ the block diagonal operator such that $(B\mathcal{U},\mathcal{V}) = b(\mathcal{U},\mathcal{V})$ for all $\mathcal{U},\mathcal{V} \in \HP$. Now, using the $A$-orthogonal projection $P_0$ on $V_G$, for any $\mathcal{U} = (\mathbf{U}_0, (\mathbf{U}_i)_{1 \le i \le N}) \in \HP$ we define the linear operator $\RL_{SORAS} \colon \HP \longrightarrow \HO$ by
\begin{align*}
\RL_{SORAS}(\mathcal{U}) : = Z\mathbf{U}_0 + (\id-P_0) \sum_{i=1}^{N} R_i^T D_i \mathbf{U}_i.
\end{align*}

In order to apply the fictitious space lemma we check the three assumptions required.\\

$\bullet$ $\RL_{SORAS}$ is onto:\\
Let $\mathbf{U} \in \HO$, we have
\begin{align*}
\mathbf{U} &= P_0 \mathbf{U} + (\id - P_0) \mathbf{U} \\
&= P_0 \mathbf{U} + (\id - P_0) \textstyle\sum_{i=1}^N R_i^T D_i R_i \mathbf{U} \\
&= P_0 \mathbf{U} + (\id - P_0) \textstyle\sum_{i=1}^N R_i^T D_i (\id - \xi_{0i}) R_i \mathbf{U} + \underbrace{(\id - P_0) \textstyle\sum_{i=1}^N R_i^T D_i \xi_{0i} R_i \mathbf{U}}_{=0}.
\end{align*}
Let us consider the last equality: since $P_0 \mathbf{U} \in V_G$ there exists $\mathbf{U}_0$ such that $Z \mathbf{U}_0 = P_0 \mathbf{U}$, meanwhile the third term is zero since $\sum_{i=1}^N R_i^T D_i \xi_{0i} R_i \mathbf{U}\in V_G$. Note also that $(\id - \xi_{0i}) R_i \mathbf{U} \in \Gw$. Therefore, we have
\begin{align}
\label{eq:RontoOnelevelSORAS}
\mathbf{U} = \RL_{SORAS}(\mathbf{U}_0, ((\id-\xi_{0i}) R_i \mathbf{U})_{1\leq i\leq N}).
\end{align}

$\bullet$ Continuity of $\RL_{SORAS}$:\\
We have to estimate a constant $c_R$ such that for all $\mathcal{U} = (\mathbf{U}_0, (\mathbf{U}_i)_{1 \le i \le N}) \in \HP$ we have
\begin{align*}
a( \RL_{SORAS} (\mathcal{U}), \RL_{SORAS} (\mathcal{U})) &\le c_R \, b(\mathcal{U}, \mathcal{U}) = c_R \, [ (E\mathbf{U}_0, \mathbf{U}_0) + \sum_{i=1}^N (B_i \mathbf{U}_i, \mathbf{U}_i)].
\end{align*}
Note that $\mathbf{U}_i \in \Gw$ here for $1 \le i \le N$. Now we have the following estimate using the $A$-orthogonality of $\id-P_0$ and Lemma~7.9 in \lecnot (page~171):
\begin{align*}
a(\RL_{AS}(\mathcal{U}), \RL_{AS}(\mathcal{U})) &= \| Z \mathbf{U}_0 + (\id-P_0) \textstyle\sum_{i=1}^N R_i^T D_i \mathbf{U}_i \|_A^2\\
&= \| Z \mathbf{U}_0 \|_A^2 + \| (\id-P_0) \textstyle\sum_{i=1}^N R_i^T D_i \mathbf{U}_i \|_A^2\\
&\le (E \mathbf{U}_0, \mathbf{U}_0) + \NC \textstyle\sum_{i=1}^N \| R_i^T D_i \mathbf{U}_i \|_A^2\\
&\le (E \mathbf{U}_0, \mathbf{U}_0) + \NC \gamma_0 \textstyle\sum_{i=1}^N (B_i \mathbf{U}_i, \mathbf{U}_i),
\end{align*}
where
\begin{align}
\label{eq:gammaOneLevelSORAS}
\gamma_0 := \max_{1 \le i \le N} \max_{\mathbf{V}_i \in \Gw} \frac{\| R_i^T D_i \mathbf{V}_i \|_A^2}{(B_i \mathbf{V}_i, \mathbf{V}_i)} = \max_{1 \le i \le N} \max_{\mathbf{V}_i \in \Gw} \frac{(D_i R_i A R_i^T D_i \mathbf{V}_i, \mathbf{V}_i)}{(B_i \mathbf{V}_i, \mathbf{V}_i)}.
\end{align}
Thus the estimate of the constant of continuity of $\RL_{AS}$ can be chosen as
\begin{align*}
c_R := \max(1,\NC \gamma_0).
\end{align*}

$\bullet$ Stable decomposition with $\RL_{SORAS}$:\\
Let $\mathbf{U} \in \HO$ be decomposed as in \eqref{eq:RontoOnelevelSORAS}. Determining the stability of the decomposition consists in estimating a constant $c_T > 0$ such that
\begin{align*}
c_T \, [(E\mathbf{U}_0, \mathbf{U}_0) + \sum_{j=1}^N (B_j (\id-\xi_{0j}) R_j \mathbf{U},  (\id-\xi_{0j}) R_j \mathbf{U})] \leq a(\mathbf{U}, \mathbf{U}).
\end{align*}
We have
\begin{align}
\nonumber
\textstyle\sum_{j=1}^N (B_j (\id-\xi_{0j}) R_j \mathbf{U}, (\id-\xi_{0j}) R_j \mathbf{U}) &\le \tau_0 \textstyle\sum_{j=1}^N (A_j^{Neu} R_j \mathbf{U}, R_j \mathbf{U})\\
\label{eq:estimateSORAS}
&\le \tau_0 \MC \, a(\mathbf{U}, \mathbf{U}),
\end{align}
where
\begin{align}
\label{eq:tauOneLevelSORAS}
\tau_0 := \max_{1 \le j \le N} \max_{\mathbf{V}_j \in \R^{\#\mathcal{N}_j}} \frac{(B_j (\id-\xi_{0j}) \mathbf{V}_j, (\id-\xi_{0j}) \mathbf{V}_j)}{(A_j^{Neu} \mathbf{V}_j, \mathbf{V}_j)},
\end{align}
and in the second step we have used Lemma~7.13 in \lecnot (page~ 175). By applying \eqref{eq:estimateSORAS}, we obtain
\begin{align*}
& (E\mathbf{U}_0, \mathbf{U}_0) + \textstyle\sum_{j=1}^N (B_j (\id-\xi_{0j}) R_j \mathbf{U}, (\id-\xi_{0j}) R_j \mathbf{U})\\
&\leq a(P_0\mathbf{U}, P_0\mathbf{U}) + \tau_0 \MC \, a(\mathbf{U}, \mathbf{U})\\
&\le (1 + \MC \tau_0) \, a(\mathbf{U}, \mathbf{U}).
\end{align*}
Thus we can take
\begin{align*}
c_T := \frac{1}{1 + \MC \tau_0}.
\end{align*}
Note that the choice $B_j := A_j^{Neu}$ would give $\tau_0 = 1$. The condition number estimate given by the fictitious space lemma for the induced preconditioner is
\begin{align*}
\kappa(M_{SORAS}^{-1} A) \le (1 + \MC \tau_0) \max(1, \NC \gamma_0).
\end{align*}
The development that now follows is motivated by the fact that the numbers $\gamma_0$ and $\tau_0$ may be very large due to the shape and size of the domain or the heterogeneities in the coefficients of the problem. This leads to a bad condition number estimate. The fix is to enlarge the coarse space by GEVPs induced by the formulae \eqref{eq:gammaOneLevelSORAS} and \eqref{eq:tauOneLevelSORAS}. More precisely, we introduce in the next subsection two generalized eigenvalue problems and the related GenEO coarse space.

\subsection{SORAS with GenEO}
\label{sub:soras_geneo}

\begin{gevp}[Generalized Eigenvalue Problem for the lower bound]
\label{gevp:tauthresholdSORAS}
For each subdomain $1 \le j \le N$, we introduce the generalized eigenvalue problem
\begin{align}
\label{eq:eigAtildeBSORAS}
\begin{array}{c}
\text{Find } (\mathbf{V}_{jk},\lambda_{jk}) \in \R^{\#\mathcal{N}_j} \setminus \{0\} \times \R
\mbox{ such that}\\[1ex]
(\id-\xi_{0j}^T) B_j (\id-\xi_{0j}) \mathbf{V}_{jk} = \lambda_{jk} A_j^{Neu} \mathbf{V}_{jk}.
\end{array}
\end{align}
Let $\tau>0$ be a user-defined threshold, we define $V_{j,geneo}^\tau \subset \R^{\#\mathcal{N}}$ as the vector space spanned by the family of vectors $(R_j^T D_j (\id - \xi_{0j}) \mathbf{V}_{jk})_{\lambda_{jk} > \tau}$ corresponding to eigenvalues larger than $\tau$. Let $V_{geneo}^\tau$ be the vector space spanned by the collection over all subdomains of vector spaces $(V_{j,geneo}^\tau)_{1 \le j \le N}$.
\end{gevp}

In the theory that follows for the stable decomposition estimate but not in the algorithm itself, we will make use of $\pi_j$ defined as the projection from $R^{\#\mathcal{N}_j}$ on $V_{j,\tau} := \text{Span}\{ \mathbf{V}_{jk} |\, \lambda_{jk} > \tau \}$ parallel to $\text{Span}\{ \mathbf{V}_{jk} |\, \lambda_{jk} \le \tau \}$. The key to \Cref{gevp:tauthresholdSORAS} is the following bound, derived from Lemma~7.7 in \cite{Dolean:2015:IDDSiam} (page~168): for all $\mathbf{U}_j \in R^{\#\mathcal{N}_j}$
\begin{align}
\label{eq:gevpLowerBoundSORAS}
(B_j (\id-\xi_{0j}) (\id-\pi_j) \mathbf{U}_j, (\id-\xi_{0j}) (\id-\pi_j) \mathbf{U}_j) &\le \tau (A_j^{Neu} \mathbf{U}_j, \mathbf{U}_j).
\end{align}

\begin{gevp}[Generalized Eigenvalue Problem for the upper bound]
\label{gevp:gammathresholdSORAS}
For each subdomain $1 \le i \le N$, we introduce the generalized eigenvalue problem
\begin{align}
\label{eq:eigDRARtDBSORAS}
\begin{array}{c}
\text{Find } (\mathbf{U}_{ik},\mu_{ik}) \in \Gw \setminus \{0\} \times \R
\mbox{ such that}\\[1ex]
D_i R_i A R_i^T D_i \mathbf{U}_{ik} = \mu_{ik} \Bw \mathbf{U}_{ik}.
\end{array}
\end{align}
Let $\gamma>0$ be a user-defined threshold, we define $V_{i,geneo}^\gamma \subset \Gw$ as the vector space spanned by the family of vectors $(R_i^T D_i \mathbf{U}_{ik})_{\mu_{ik} > \gamma}$ corresponding to eigenvalues larger than $\gamma$. Let $V_{geneo}^\gamma$ be the vector space spanned by the collection over all subdomains of vector spaces $(V_{i,geneo}^\gamma)_{1 \le i \le N}$.
\end{gevp}

In our theory for the continuity estimate but not in the algorithm itself, we will make use of the projection $\eta_i$ from $\Gw$ on $V_{i,\gamma} := \mathrm{span}\{\mathbf{U}_{ik} |\, \mu_{ik} > \gamma\}$ parallel to $\mathrm{span}\{\mathbf{U}_{ik} |\, \mu_{ik} \le \gamma\}$. The key to \Cref{gevp:gammathresholdSORAS} is the following bound, derived from Lemma~7.7 in \cite{Dolean:2015:IDDSiam} (page~168): for all $\mathbf{U}_i \in \Gw$
\begin{align}
\label{eq:gevpUpperBoundSORAS}
(A R_i^T D_i (\id - \eta_i) \mathbf{U}_i, R_i^T D_i (\id - \eta_{i}) \mathbf{U}_i) &\le \gamma (\Bw \mathbf{U}_i, \mathbf{U}_i) = \gamma (B_i \mathbf{U}_i, \mathbf{U}_i).
\end{align}

We can now build the coarse space $V_0$ from the near-kernel $G$ along with \Cref{gevp:tauthresholdSORAS,gevp:gammathresholdSORAS}, defining the following vector space sum:
\begin{align}
\label{eq:V0GenEOSORAS}
V_0 := V_G + V_{geneo}^\tau + V_{geneo}^\gamma.
\end{align}
The coarse space $V_0$ is spanned by the columns of a full rank rectangular matrix with $\#\mathcal{N}_0$ columns, which we will denote by $Z$.

We can now define the abstract framework for the SORAS with GenEO preconditioner. Let $\HP$ be defined by
\begin{align*}
\HP := \R^{\#\mathcal{N}_0} \times \prod_{i=1}^N \Gw
\end{align*}
endowed with the Euclidean scalar product. We now define the bilinear form $b \colon \HP \times \HP \longrightarrow \R$ arising from the coarse operator $E = Z^T A Z$ and the local SPD matrices $(B_i)_{1\le i \le N}$ such that
\begin{align*}
(\mathcal{U},\mathcal{V}) \mapsto b(\mathcal{U},\mathcal{V}) := (E\mathbf{U}_0,\mathbf{V}_0) + \sum_{i=1}^{N} (B_i \mathbf{U}_i, \mathbf{V}_i).
\end{align*}
Further, we denote by $B \colon \HP \longrightarrow \HP$ the block diagonal operator such that $(B\mathcal{U},\mathcal{V}) = b(\mathcal{U},\mathcal{V})$ for all $\mathcal{U},\mathcal{V} \in \HP$. Using the $A$-orthogonal projection $P_0$ on $V_0$, we define $\RL_{SORAS,2} \colon \HP \longrightarrow \HO$ as
\begin{align*}
\RL_{SORAS,2}(\mathcal{U}) : = Z\mathbf{U}_0 + (\id-P_0) \sum_{i=1}^{N} R_i^T D_i \mathbf{U}_i.
\end{align*}

In order to apply the fictitious space lemma we check the three assumptions as before. Surjectivity of $\RL_{SORAS,2}$ follows identically to that for $\RL_{SORAS}$ as in \Cref{sub:soras}. Thus we need only consider continuity and the stable decomposition.

$\bullet$ Continuity of $\RL_{SORAS,2}$:\\
For $\mathcal{U} = (\mathbf{U}_0, (\mathbf{U}_i)_{1 \le i \le N}) \in \HP$, by $A$-orthogonality of $I-P_0$, using Lemma~7.9 in \lecnot (page~171) along with the GEVP bound \eqref{eq:gevpUpperBoundSORAS} we have
\begin{align*}
a(\RL(\mathcal{U}),\RL(\mathcal{U})) &= \| Z \mathbf{U}_0 + (\id-P_0) \textstyle\sum_{i=1}^N R_i^T D_i \mathbf{U}_i \|_{A}^{2}\\
&= \| Z \mathbf{U}_0 \|_{A}^{2} + \| (\id-P_0) \textstyle\sum_{i=1}^N R_i^T D_i \mathbf{U}_i \|_{A}^{2}\\
&= (E \mathbf{U}_0, \mathbf{U}_0) + \| (\id-P_0) \textstyle\sum_{i=1}^N R_i^T D_i (\id-\eta_{i}) \mathbf{U}_i \|_{A}^{2}\\
&\le (E \mathbf{U}_0,\mathbf{U}_0) + (A \textstyle\sum_{i=1}^N R_i^T D_i (\id-\eta_{i}) \mathbf{U}_i, \textstyle\sum_{i=1}^N R_i^T D_i (\id-\eta_{i}) \mathbf{U}_i)\\
&\le (E \mathbf{U}_0, \mathbf{U}_0) + \NC \textstyle\sum_{i=1}^N (A R_i^T D_i (\id-\eta_{i}) \mathbf{U}_i, R_i^T D_i (\id-\eta_{i}) \mathbf{U}_i)\\
&\le (E \mathbf{U}_0, \mathbf{U}_0) + \NC\gamma \textstyle\sum_{i=1}^N (B_i \mathbf{U}_i, \mathbf{U}_i).
\end{align*}
Thus the estimate of the constant of continuity of $\RL_{SORAS,2}$ can be chosen as
\begin{align*}
c_R := \max(1,\NC\gamma).
\end{align*}

$\bullet$ Stable decomposition with $\RL_{SORAS,2}$:\\
Let $\mathbf{U} \in \HO$, we have
\begin{align*}
\mathbf{U} &= P_0 \mathbf{U} + (\id - P_0) \textstyle\sum_{i=1}^N R_i^T D_i R_i \mathbf{U} \\
&= P_0 \mathbf{U} + (\id - P_0) \textstyle\sum_{i=1}^N R_i^T D_i (\id - \xi_{0i}) R_i \mathbf{U} + \underbrace{(\id - P_0) \textstyle\sum_{i=1}^N R_i^T D_i \xi_{0i} R_i \mathbf{U}}_{=0}\\
&= P_0 \mathbf{U} + (\id - P_0) \textstyle\sum_{i=1}^N R_i^T D_i (\id - \xi_{0i}) (\id - \pi_i) R_i \mathbf{U}\\
& \mathrel{\phantom{=}} \mathrel+ \underbrace{(\id - P_0) \textstyle\sum_{i=1}^N R_i^T D_i (\id - \xi_{0i}) \pi_i R_i \mathbf{U}}_{=0}.
\end{align*}
The very last term is zero since for all $i$, $R_i^T D_i (\id - \xi_{0i}) \pi_i R_i \mathbf{U} \in V_{geneo}^\tau \subset V_0$. Let $\mathbf{U}_0 \in \R^{\#\mathcal{N}_0}$ be such that $Z \mathbf{U}_0 = P_0\mathbf{U}$, then we can choose the decomposition
\begin{align*}
\mathbf{U} = \RL_{SORAS,2}(\mathbf{U}_0, ((\id - \xi_{0i}) (\id - \pi_i) R_i \mathbf{U})_{1 \leq i \leq N}).
\end{align*}
With this decomposition, using the GEVP bound \eqref{eq:gevpLowerBoundSORAS} and Lemma~7.13 in \lecnot (page~175), we have
\begin{align*}
& (E\mathbf{U}_0, \mathbf{U}_0) + \textstyle\sum_{j=1}^N (B_j (\id-\xi_{0j}) (\id - \pi_j) R_j \mathbf{U}, (\id-\xi_{0j}) (\id - \pi_j) R_j \mathbf{U})\\
&\le a(Z\mathbf{U}_0, Z\mathbf{U}_0) + \tau \textstyle\sum_{j=1}^N (A_j^{Neu} R_j \mathbf{U}, R_j \mathbf{U})\\
&\le a(P_0\mathbf{U}, P_0\mathbf{U}) + \tau \MC \, a(\mathbf{U}, \mathbf{U})\\
&\le (1 + \MC \tau) \, a(\mathbf{U}, \mathbf{U}).
\end{align*}
Thus the stable decomposition property holds with constant
\begin{align*}
c_T := \frac{1}{1 + \MC \tau}.
\end{align*}

Altogether, for given user-defined positive constants $\tau$ and $\gamma$, the SORAS with GenEO preconditioner yields the condition number estimate
\begin{align*}
\kappa(M^{-1}_{SORAS,2} A) \le (1 + \MC \tau) \max(1,\NC\gamma),
\end{align*}
where
\begin{align*}
M^{-1}_{SORAS,2} &= \RL_{SORAS,2} B^{-1} \RL_{SORAS,2}^*.
\end{align*}
In order to specify this preconditioner we need the adjoint operator $\RL_{SORAS,2}^*$ which now requires an additional projection. Let $q_i$ denote the ($l_2$-)orthogonal projection from $\R^{\#{\mathcal N}_i}$ on $G_i^{\bot_{B_i}}$, then the adjoint is defined by the relationship
\begin{align*}
(\mathcal{U}, \RL_{SORAS,2}^*(\mathbf{V}))_{\HP} &= (\RL_{SORAS,2}(\mathcal{U}), \mathbf{V})_{\HO}\\
&= (Z \mathbf{U}_0, \mathbf{V}) + \left((\id-P_0) \textstyle\sum_{i=1}^{N} R_i^T D_i \mathbf{U}_i, \mathbf{V}\right)\\
&= (\mathbf{U}_0, Z^T \mathbf{V}) + \textstyle\sum_{i=1}^{N} \left(\mathbf{U}_i, D_i R_i (\id-P_0^T) \mathbf{V}\right)\\
&= (\mathbf{U}_0, Z^T \mathbf{V}) + \textstyle\sum_{i=1}^{N} \left(\mathbf{U}_i, q_i D_i R_i (\id-P_0^T) \mathbf{V}\right)
\end{align*}
for all $\mathcal{U} = (\mathbf{U}_0, (\mathbf{U}_i)_{1 \le i \le N}) \in \HP$ and $\mathbf{V} \in \HO$. Note that the projection $q_i$ ensures that $q_i D_i R_i (\id-P_0^T) \mathbf{V} \in G_i^{\bot_{B_i}}$ while leaving the ($l_2$-)inner product unchanged as $\mathbf{U}_i \in G_i^{\bot_{B_i}}$. Hence we identify that
\begin{align*}
\RL_{SORAS,2}^*(\mathbf{V}) = (Z^T \mathbf{V},(q_i D_i R_i (\id-P_0^T) \mathbf{V})_{1 \le i \le N})
\end{align*}
and thus
\begin{align*}
M_{SORAS,2}^{-1} &= Z (Z^T A Z)^{-1} Z^T + (\id-P_0) \sum_{i=1}^N R_i^T D_i B_i^\dag q_i D_i R_i (\id-P_0^T).
\end{align*}

To determine an explicit form for $q_i$, suppose we wish to apply the projection to $\mathbf{U}_i \in \R^{\#{\mathcal N}_i}$, then $q_i \mathbf{U}_i$ satisfies the constrained minimisation problem\footnote{Here, by abuse of notation, $G_i$ represents a matrix whose columns form a basis for the near-kernel space $G_i$.}
\begin{align}
\label{eq:OptProbq}
\min_{\mathbf{V}_i \in G_i^{\bot_{B_i}}} \frac{1}{2} \|\mathbf{V}_i - \mathbf{U}_i\|^2 = \min_{\mathbf{V}_i \, | \, G_i^T B_i \mathbf{V}_i = \mathbf{0}} \frac{1}{2} \|\mathbf{V}_i - \mathbf{U}_i\|^2.
\end{align}
We can solve the optimisation problem \eqref{eq:OptProbq} using the Lagrange multiplier method. Introducing the multipliers $\boldsymbol{\lambda} \in \R^{\#{\mathcal N}_{G_i}}$, the optimality conditions for $\mathbf{V}_i = q_i \mathbf{U}_i$ are given by
\begin{align*}
(q_i \mathbf{U}_i - \mathbf{U}_i) - B_i G_i \boldsymbol{\lambda} &= \mathbf{0},\\
G_i^T B_i q_i \mathbf{U}_i &= \mathbf{0}.
\end{align*}
Solving for $\boldsymbol{\lambda}$ yields
\begin{align*}
\boldsymbol{\lambda} = - (G_i^T B_i^2 G_i)^{-1} G_i^T B_i \mathbf{U}_i,
\end{align*}
and thus, as $\mathbf{U}_i \in \R^{\#{\mathcal N}_i}$ was arbitrary, we can derive the explicit form
\begin{align*}
q_i = \id - B_i G_i (G_i^T B_i^2 G_i)^{-1} G_i^T B_i.
\end{align*}
We also require an explicit expression for $B_i^\dag = (\id-\xi_{0i})B_i^{-1}$. Since $\xi_{0i}$ is the $B_i$-orthogonal projection from $\R^{\#{\mathcal N}_i}$ on $G_i$ parallel to $G_i^{\bot_{B_i}}$ it is given by
\begin{align*}
\xi_{0i} = G_i (G_i^T B_i G_i)^{-1} G_i^T B_i
\end{align*}
and thus we have
\begin{align*}
(\id-\xi_{0i}) B_i^{-1} q_i &= (B_i^{-1} - G_i (G_i^T B_i G_i)^{-1} G_i^T)(I - B_i G_i (G_i^T B_i^2 G_i)^{-1} G_i^T B_i)\\
&= B_i^{-1} - G_i (G_i^T B_i G_i)^{-1} G_i^T\\
&\mathrel{\phantom{=}} \mathrel- G_i (G_i^T B_i^2 G_i)^{-1} G_i^T B_i + G_i (G_i^T B_i^2 G_i)^{-1} G_i^T B_i\\
&= B_i^{-1} - G_i (G_i^T B_i G_i)^{-1} G_i^T\\
&= (\id-\xi_{0i}) B_i^{-1}.
\end{align*}
From the penultimate expression we see that we have symmetry of $B_i^\dag q_i$ and, moreover, that the inclusion of $q_i$ is, in theory, unnecessary since $B_i^\dag q_i = B_i^\dag$. Hence, we find that the two-level preconditioner can be given by
\begin{align*}
M_{SORAS,2}^{-1} &= Z (Z^T A Z)^{-1} Z^T + (\id-P_0) \sum_{i=1}^N R_i^T D_i (\id-\xi_{0i}) B_i^{-1} D_i R_i (\id-P_0^T).
\end{align*}
Further, note that we have $(\id-P_0) R_i^T D_i (\id-\xi_{0i}) \mathbf{U}_i = (\id-P_0) R_i^T D_i \mathbf{U}_i$ for any $\mathbf{U}_i \in \R^{\#{\mathcal N}_i}$ since $ R_i^T D_i \xi_{0i} \mathbf{U}_i \in V_G \subset V_0$. Additionally, note that the $A$-orthogonal projection $P_0$ on $V_0$ is given by
\begin{align*}
P_0 = Z (Z^T A Z)^{-1} Z^T A.
\end{align*}
Hence, we deduce that we can write the SORAS with GenEO preconditioner in a simpler expression as
\begin{align}
\label{eq:SORAS-GenEO-preconditioner}
M_{SORAS,2}^{-1} &= P_0 A^{-1} + (\id-P_0) \sum_{i=1}^N R_i^T D_i B_i^{-1} D_i R_i (\id-P_0^T).
\end{align}

\subsection{Inexact coarse solves}
\label{sub:soras_geneo_with_inexact_coarse_solves}

We now consider the SORAS with GenEO method with inexact coarse solves given by $\tilde{E}$ satisfying Assumption \ref{ass:spd_tildeE}. We follow the same premise as \Cref{sub:as_geneo_with_inexact_coarse_solves} and again let $\tilde{P}_0 := Z \tilde{E}^{-1} Z^T A$ be the inexact coarse solve approximation of $P_0 = Z E^{-1} Z^T A$. Our analysis predominantly follows that in \cite{Nataf:2020:MAR}.

First we revisit the eigenvalue problems in \Cref{gevp:tauthresholdSORAS,gevp:gammathresholdSORAS}. Recall that $\pi_j$ is the projection from $\R^{\#{\mathcal N}_j}$ on $V_{j,\tau} := \text{Span}\{ \mathbf{V}_{jk} |\, \lambda_{jk} > \tau \}$ parallel to $\text{Span}\{ \mathbf{V}_{jk} |\, \lambda_{jk} \le \tau \}$, where $(\mathbf{V}_{jk},\lambda_{jk})$ are eigenpairs from \Cref{gevp:tauthresholdSORAS}. Similarly, $\eta_i$ is the $B_i$-orthogonal projection from $G_i^{\bot_{B_i}}$ on
\begin{align*}
V_{i,\gamma} := \mathrm{span}\{\mathbf{U}_{ik} |\, \mu_{ik} > \gamma\}
\end{align*}
parallel to
\begin{align*}
W_{i,\gamma} := \mathrm{span}\{\mathbf{U}_{ik} |\, \mu_{ik} \le \gamma\},
\end{align*}
where $({\mathbf U}_{ik},\mu_{ik})$ are eigenpairs from \Cref{gevp:gammathresholdSORAS}. Now, for $1 \le j \le N$, let us define the $(\id-\xi_{0j}^T) B_j (\id-\xi_{0j})$-orthogonal projection $p_j$ from $R^{\#{\mathcal N}_j}$ on
\begin{align*}
V_{j,\tau,\gamma} := V_{j,\tau} + V_{j,\gamma}.
\end{align*}
Note that $V_{j,\gamma} \subset G_j^{\bot_{B_j}}$ and $G_j \nsubseteq V_{j,\tau}$ since any vector in $G_j$ corresponds to a zero eigenvalue of \Cref{gevp:tauthresholdSORAS}. Thus $G_j \cap V_{j,\tau,\gamma} = \left\lbrace0\right\rbrace$ and the projection is well-defined and, letting $Y$ be a basis of $V_{j,\tau,\gamma}$, given by the formula
\begin{align*}
p_j = Y \left( Y^{T} (\id-\xi_{0j}^T) B_j (\id-\xi_{0j}) Y \right)^{-1} Y^{T} (\id-\xi_{0j}^T) B_j (\id-\xi_{0j}).
\end{align*}
While $(\id-\xi_{0j}^T) B_j (\id-\xi_{0j})$ is singular on $G_j$, it is nonsingular on $\mathrm{range}(Y) = V_{j,\tau,\gamma}$ and thus we can take the required inverse.

We now show that suitable analogous results to Lemma 5 in \cite{Nataf:2020:MAR} for $p_j$ hold in our case. Owing to \Cref{gevp:tauthresholdSORAS}, we have that for all $\mathbf{U}_j \in R^{\#{\mathcal N}_j}$
\begin{align*}
& \tau (A_j^{Neu} \mathbf{U}_j, \mathbf{U}_j)\\
&\ge (B_j (\id-\xi_{0j}) (\id-\pi_{j}) \mathbf{U}_j, (\id-\xi_{0j}) (\id-\pi_{j}) \mathbf{U}_j)\\
&= (B_j (\id-\xi_{0j}) (\id-p_j+(p_j-\pi_{j})) \mathbf{U}_j, (\id-\xi_{0j}) (\id-p_j+(p_j-\pi_{j})) \mathbf{U}_j)\\
&= \|(\id-\xi_{0j}) (\id-p_j) \mathbf{U}_j\|_{B_j}^2 + \|(\id-\xi_{0j}) (p_j-\pi_j) \mathbf{U}_j\|_{B_j}^2\\
& \mathrel{\phantom{=}} \mathrel+ 2 \, (B_j (\id-\xi_{0j}) (\id-p_j) \mathbf{U}_j, (\id-\xi_{0j}) (p_j-\pi_{j}) \mathbf{U}_j)\\
&= \|(\id-\xi_{0j}) (\id-p_j) \mathbf{U}_j\|_{B_j}^2 + \|(\id-\xi_{0j}) (p_j-\pi_j) \mathbf{U}_j\|_{B_j}^2,
\end{align*}
the cross term in the penultimate line being zero due to the fact that $p_j$ is the $(\id-\xi_{0j}^T) B_j (\id-\xi_{0j})$-orthogonal projection and noting that $\pi_j$ also projects into $V_{j,\tau,\gamma}$. Thus we deduce that for all $\mathbf{U}_j \in R^{\#{\mathcal N}_j}$
\begin{align*}
(B_j (\id-\xi_{0j}) (\id-p_j) \mathbf{U}_j, (\id-\xi_{0j}) (\id-p_j) \mathbf{U}_j) \le \tau (A_j^{Neu} \mathbf{U}_j, \mathbf{U}_j)
\end{align*}
and, moreover, letting $\mathbf{U}_{j} = R_j \mathbf{U}$, summing over $j$ and using Lemma~7.13 of \cite{Dolean:2015:IDDSiam} (page~175), for all $\mathbf{U} \in R^{\#{\mathcal N}}$ we have
\begin{align}
\label{eq:lowerBoundFromEig-SORAS-special}
\sum_{j=1}^{N} (B_j (\id-\xi_{0j}) (\id-p_j) R_j \mathbf{U}, (\id-\xi_{0j}) (\id-p_j) R_j \mathbf{U}) \le \MC \tau \, a(\mathbf{U},\mathbf{U}).
\end{align}

In defining the preconditioner, we will want to operate within the space $W_{i,\gamma}$. Let $b_{W_{i,\gamma}}$ denote the restriction of $b_i$ to $W_{i,\gamma} \times W_{i,\gamma}$ so that
\begin{align*}
b_{W_{i,\gamma}} &\colon W_{i,\gamma} \times W_{i,\gamma} \longrightarrow \R, & (\mathbf{U}_i, \mathbf{V}_i) &\mapsto b_i(\mathbf{U}_i, \mathbf{V}_i).
\end{align*}
The Riesz representation theorem gives the existence of a unique isomorphism $B_{W_{i,\gamma}} \colon W_{i,\gamma} \longrightarrow W_{i,\gamma}$ into itself so that for all $\mathbf{U}_i, \mathbf{V}_i \in W_{i,\gamma}$ we have
\begin{align*}
b_{W_{i,\gamma}}(\mathbf{U}_i,\mathbf{V}_i) = (B_{W_{i,\gamma}} \mathbf{U}_i,\mathbf{V}_i).
\end{align*}
The inverse of $B_{W_{i,\gamma}}$ will be denoted by $\tilde{B}_i^\dag$ and is given by the following formula
\begin{equation}
\tilde{B}_i^\dag = (\id - \eta_i)(\id - \xi_{0i}) B_i^{-1}.
\end{equation}
In order to check this formula, we have to show that
\begin{align}
\label{eq:InverseB_WFormula}
B_{W_{i,\gamma}} (\id - \eta_i) (\id - \xi_{0i})B_i^{-1} y = y
\end{align}
for all $y \in W_{i,\gamma}$. Let $z \in W_{i,\gamma}$, using the fact that $(\id - \eta_i) (\id - \xi_{0i})$ is the $b_i$-orthogonal projection on $W_{i,\gamma}$, we have
\begin{align*}
(B_{W_{i,\gamma}} (\id - \eta_i) (\id - \xi_{0i}) B_i^{-1} y, z) &= b_i((\id - \eta_i) (\id - \xi_{0i}) B_i^{-1} y, z)\\
&= b_i(B_i^{-1} y, (\id - \xi_{0i}) (\id - \eta_i) z)\\
&= b_i(B_i^{-1} y, z) = (y,z).
\end{align*}
Since this equality holds for any $z \in W_{i,\gamma}$, this proves that \eqref{eq:InverseB_WFormula} holds and thus that $\tilde{B}_i^\dag$ provides the inverse of $B_{W_{i,\gamma}}$.

We now define the abstract framework for the preconditioner. Let $\HP$ be defined by
\begin{align*}
\HP := \R^{\#{\mathcal N}_0} \times \Pi_{i=1}^N W_{i,\gamma}
\end{align*}
endowed with standard Euclidean scalar product. We make use of the bilinear form $\tilde{b} \colon \HP \times \HP \longrightarrow \R$, with block diagonal matrix form $\tilde{B}$, such that
\begin{align*}
\tilde{b}(\mathcal{U},\mathcal{V}) := (\tilde{E}\mathbf{U}_0,\mathbf{V}_0) + \sum_{i=1}^{N} (B_i \mathbf{U}_i, \mathbf{V}_i),
\end{align*}
and consider the linear operator $\widetilde{\RL}_{SORAS,2} \colon \HP \longrightarrow \HO$ defined by
\begin{align*}
\widetilde{\RL}_{SORAS,2}(\mathcal{U}) : = Z\mathbf{U}_0 + (\id-\tilde{P}_0) \sum_{i=1}^{N} R_i^T D_i \mathbf{U}_i.
\end{align*}
Note that
\begin{align*}
\widetilde{\RL}_{SORAS,2}(\mathcal{U}) - \RL_{SORAS,2}(\mathcal{U}) = (P_0-\tilde{P}_0) \sum_{i=1}^{N} R_i^T D_i \mathbf{U}_i \in V_0
\end{align*}
since $\mathrm{Im}(P_0-\tilde{P}_0) \subset  V_0$ .

Following similar arguments to those presented in \cite{Nataf:2020:MAR}, we now check the three assumptions of the fictitious space lemma.

$\bullet$ $\widetilde{\RL}_{SORAS,2}$ is onto:\\
For $\mathbf{U} \in \HO$ we have that
\begin{align*}
\mathbf{U} &= P_0 \mathbf{U} + (\id - P_0) \mathbf{U} = P_0 \mathbf{U} + (\id-P_0) \textstyle\sum_{i=1}^{N} R_i^T D_i R_i \mathbf{U}\\
&= P_0 \mathbf{U} + (\id-P_0) \textstyle\sum_{i=1}^{N} R_i^T D_i (\id-\xi_{0i}) R_i \mathbf{U}\\
&= P_0 \mathbf{U} + (\id-P_0) \textstyle\sum_{i=1}^{N} R_i^T D_i (\id-\eta_i) (\id-\xi_{0i}) R_i \mathbf{U}\\
&= F_0 \mathbf{U} + (\id-\tilde{P}_0) \textstyle\sum_{i=1}^{N} R_i^T D_i (\id-\eta_i) (\id-\xi_{0i}) R_i \mathbf{U},
\end{align*}
where
\begin{align*}
F_0 \mathbf{U} = P_0 \mathbf{U} + (\tilde{P}_0-P_0) \textstyle\sum_{i=1}^{N} R_i^T D_i (\id-\eta_i) (\id-\xi_{0i}) R_i \mathbf{U} \in V_0.
\end{align*}
Let $\mathbf{U}_0 \in \R^{\#{\mathcal N}_0}$ be such that $Z \mathbf{U}_0 = F_0\mathbf{U}$, then we have the decomposition
\begin{align*}
\mathbf{U} = \widetilde{\RL}_{SORAS,2}(\mathbf{U}_0,((\id-\eta_i) (\id-\xi_{0i}) R_i \mathbf{U})_{1 \le i \le N}).
\end{align*}

$\bullet$ Continuity of $\widetilde{\RL}_{SORAS,2}$:\\
Let $\delta>0$ be a positive parameter. For $\mathcal{U} = (\mathbf{U}_0, (\mathbf{U}_i)_{1 \le i \le N}) \in \HP$, by using the fact that $(\id-\eta_i)\mathbf{U}_i = \mathbf{U}_i$ for $1 \le i \le N$ (recall that $\mathbf{U}_i \in W_{i,\gamma}$), $\mathrm{Im}(P_0-\tilde{P}_0)$ is $a$-orthogonal to $\mathrm{Im}(\id-P_0)$, the Cauchy--Schwarz inequality, Young's inequality (with parameter $\delta$), $A$-orthogonality of $I-P_0$, the bound \eqref{eq:EBoundedBytildeE} from \Cref{lemma:EandtildeE}, and Lemma~7.9 of \lecnot (page~171) we have
\begin{align*}
& a(\widetilde{\RL}_{SORAS,2}(\mathcal{U}),\widetilde{\RL}_{SORAS,2}(\mathcal{U}))\\
&= \| \RL_{SORAS,2}(\mathcal{U}) + (P_0-\tilde{P}_0) \textstyle\sum_{i=1}^{N} R_i^T D_i \mathbf{U}_i \|_{A}^{2}\\
&= \| \RL_{SORAS,2}(\mathcal{U}) \|_{A}^{2} + 2a(\RL_{SORAS,2}(\mathcal{U}),(P_0-\tilde{P}_0) \textstyle\sum_{i=1}^{N} R_i^T D_i \mathbf{U}_i))\\
& \mathrel{\phantom{=}} \mathrel+ \| (P_0-\tilde{P}_0) \textstyle\sum_{i=1}^{N} R_i^T D_i \mathbf{U}_i \|_{A}^{2}\\
&= \| Z {\mathbf U}_0 \|_{A}^{2} + \| (\id-P_0) \textstyle\sum_{i=1}^N R_i^T D_i {\mathbf U}_i \|_{A}^{2} + 2a(Z\mathbf{U}_0,(P_0-\tilde{P}_0) \textstyle\sum_{i=1}^{N} R_i^T D_i \mathbf{U}_i)\\
& \mathrel{\phantom{=}} \mathrel+ \| (P_0-\tilde{P}_0) \textstyle\sum_{i=1}^{N} R_i^T D_i \mathbf{U}_i \|_{A}^{2}\\
&\le \| Z\mathbf{U}_0 \|_{A}^{2} + \| \textstyle\sum_{i=1}^N R_i^T D_i {\mathbf U}_i \|_{A}^{2} + \delta \| Z\mathbf{U}_0 \|_{A}^{2} + \tfrac{1}{\delta} \| (P_0-\tilde{P}_0) \textstyle\sum_{i=1}^{N} R_i^T D_i \mathbf{U}_i \|_{A}^{2})\\
& \mathrel{\phantom{=}} \mathrel+ \| (P_0-\tilde{P}_0) \textstyle\sum_{i=1}^{N} R_i^T D_i \mathbf{U}_i \|_{A}^{2}\\
&\le (1+\delta) \| Z\mathbf{U}_0 \|_{A}^{2} + \left(1 + (1+\tfrac{1}{\delta}) \| P_0-\tilde{P}_0 \|_{A}^{2} \right) \| \textstyle\sum_{i=1}^{N} R_i^T D_i \mathbf{U}_i \|_{A}^{2}\\
&\le (1+\delta) \lambda_{max}(E\tilde{E}^{-1}) ( \tilde{E}\mathbf{U}_0, \mathbf{U} ) + \left(1 + (1+\tfrac{1}{\delta}) \| P_0-\tilde{P}_0 \|_{A}^{2} \right) \NC \textstyle\sum_{i=1}^{N} \| R_i^T D_i \mathbf{U}_i \|_{A}^{2}\\
&= (1+\delta) \lambda_{max}(E\tilde{E}^{-1}) ( \tilde{E}\mathbf{U}_0, \mathbf{U} )\\
& \mathrel{\phantom{=}} \mathrel+ \left(1 + (1+\tfrac{1}{\delta}) \| P_0-\tilde{P}_0 \|_{A}^{2} \right) \NC \textstyle\sum_{i=1}^{N} (A R_i^T D_i (\id-\eta_{i}) {\mathbf U}_i , R_i^T D_i (\id-\eta_{i}) {\mathbf U}_i )\\
&\le c_R \, \tilde{b}(\mathcal{U},\mathcal{U}),
\end{align*}
with
\begin{align*}
c_R = \max\left((1+\delta) \lambda_{max}(E\tilde{E}^{-1}), \left(1 + (1+\tfrac{1}{\delta}) \| P_0-\tilde{P}_0 \|_{A}^{2} \right) \NC \gamma \right).
\end{align*}
Note that this continuity constant is similar to the case of the AS algorithm, only now we have an additional factor of $\gamma$ in the second term. Again, we can minimise over $\delta>0$ enabling us to take
\begin{align}
\label{eq:c_R_inexact_soras}
c_R = \frac{\NC \gamma (1+\epsilon_{A}^2) + \lambda_{+} + \sqrt{(\NC \gamma (1+\epsilon_{A}^2)-\lambda_{+})^2 + 4 \lambda_{+} \NC \gamma \epsilon_{A}^2}}{2}.
\end{align}

$\bullet$ Stable decomposition with $\widetilde{\RL}_{SORAS,2}$:\\
For $\mathbf{U} \in \HO$ we have that
\begin{align*}
\mathbf{U} &= P_0 \mathbf{U} + (\id - P_0) \mathbf{U} = P_0 \mathbf{U} + (\id-P_0) \textstyle\sum_{i=1}^{N} R_i^T D_i R_i \mathbf{U}\\
&= P_0 \mathbf{U} + (\id-P_0) \textstyle\sum_{i=1}^{N} R_i^T D_i (\id-p_i) R_i \mathbf{U}\\
&= P_0 \mathbf{U} + (\id-P_0) \textstyle\sum_{i=1}^{N} R_i^T D_i (\id-\xi_{0i}) (\id-p_i) R_i \mathbf{U}\\
&= F \mathbf{U} + (\id-\tilde{P}_0) \textstyle\sum_{i=1}^{N} R_i^T D_i (\id-\xi_{0i}) (\id-p_i) R_i \mathbf{U},
\end{align*}
where
\begin{align}
\label{eq:FU_soras}
F \mathbf{U} = P_0 \mathbf{U} + (\tilde{P}_0-P_0) \textstyle\sum_{i=1}^{N} R_i^T D_i (\id-\xi_{0i}) (\id-p_i) R_i \mathbf{U} \in V_0.
\end{align}
Let $\mathbf{U}_0 \in \R^{\#{\mathcal N}_0}$ be such that $Z \mathbf{U}_0 = F\mathbf{U}$, then we have the decomposition
\begin{align*}
\mathbf{U} = \widetilde{\RL}_{SORAS,2}(\mathbf{U}_0,((\id-\xi_{0i}) (\id-p_i) R_i \mathbf{U})_{1 \le i \le N}) =: \widetilde{\RL}_{SORAS,2}(\mathcal{U}).
\end{align*}
We now show that this decomposition is stable, again following an analogous approach to \cite{Nataf:2020:MAR}. Firstly, note that the bound in \eqref{eq:lowerBoundFromEig-SORAS-special} applies to the domain decomposition part of $\tilde{b}(\mathcal{U},\mathcal{U})$. The remaining term in $\tilde{b}(\mathcal{U},\mathcal{U})$ corresponds to the coarse operator $\tilde{E}$ where, using the bound \eqref{eq:tildeEBoundedByE} of \Cref{lemma:EandtildeE}, we have
\begin{align*}
(\tilde{E} {\mathbf U}_0,{\mathbf U}_0) &\le \lambda_{max}(E^{-1} \tilde{E}) \| Z\mathbf{U}_0 \|_{A}^{2} = \lambda_{max}(E^{-1} \tilde{E}) \|F\mathbf{U}\|_{A}^2.
\end{align*}
Now from \eqref{eq:FU_soras}, letting $\delta>0$ be a positive parameter, making use of the Cauchy--Schwarz inequality, Young's inequality (with parameter $\delta$), Lemma~7.9 of \lecnot (page~171) gives
\begin{align*}
\|F\mathbf{U}\|_{A}^2 &\le \| P_0 \mathbf{U} \|_{A}^{2} + 2a(P_0 \mathbf{U},(\tilde{P}_0-P_0) \textstyle\sum_{i=1}^{N} R_i^T D_i (\id-\xi_{0i}) (\id-p_i) R_i \mathbf{U})\\
& \mathrel{\phantom{=}} \mathrel+ \| (\tilde{P}_0-P_0) \textstyle\sum_{i=1}^{N} R_i^T D_i (\id-\xi_{0i}) (\id-p_i) R_i \mathbf{U} \|_{A}^{2}\\
&\le (1+\delta)\| P_0 \mathbf{U} \|_{A}^{2} + (1+\tfrac{1}{\delta})\| (\tilde{P}_0-P_0) \textstyle\sum_{i=1}^{N} R_i^T D_i (\id-\xi_{0i}) (\id-p_i) R_i \mathbf{U} \|_{A}^{2}\\
&\le (1+\delta) a(\mathbf{U},\mathbf{U}) + (1+\tfrac{1}{\delta}) \epsilon_{A}^{2} \| \textstyle\sum_{i=1}^{N} R_i^T D_i (\id-\xi_{0i}) (\id-p_i) R_i \mathbf{U} \|_{A}^{2}\\
&\le (1+\delta) a(\mathbf{U},\mathbf{U}) + (1+\tfrac{1}{\delta}) \epsilon_{A}^{2} \NC \textstyle\sum_{i=1}^{N} \| R_i^T D_i (\id-\xi_{0i}) (\id-p_i) R_i \mathbf{U} \|_{A}^{2}\\
&\le (1+\delta) a(\mathbf{U},\mathbf{U}) + (1+\tfrac{1}{\delta}) \epsilon_{A}^{2} \NC \gamma \textstyle\sum_{i=1}^{N} \| (\id-\xi_{0i}) (\id-p_i) R_i \mathbf{U} \|_{B_i}^{2}\\
&\le \left( (1+\delta) + (1+\tfrac{1}{\delta}) \epsilon_{A}^{2} \NC \MC \tau \gamma \right) a(\mathbf{U},\mathbf{U}),
\end{align*}
where in the penultimate step we have made use of Lemma 6 in \cite{Nataf:2020:MAR} (applied with $A$ replaced by $D_i R_i A R_i^T D_i$ and $B$ by $\Bw$) and in the last step utilised the bound in \eqref{eq:lowerBoundFromEig-SORAS-special}.

We can minimise over the parameter $\delta$, yielding $\delta = \epsilon_{A}\sqrt{\NC\MC\tau\gamma}$, and thus
\begin{align*}
(\tilde{E} {\mathbf U}_0,{\mathbf U}_0) &\le \lambda_{max}(E^{-1} \tilde{E}) (1+\epsilon_{A}\sqrt{\NC\MC\tau\gamma})^2 a(\mathbf{U},\mathbf{U}).
\end{align*}
Combining this estimate with \eqref{eq:lowerBoundFromEig-SORAS-special} gives $\tilde{b}(\mathcal{U},\mathcal{U}) \le c_T^{-1} \, a(\mathbf{U}, \mathbf{U})$ where
\begin{align}
\nonumber
c_T &= \frac{1}{\MC\tau + \lambda_{max}(E^{-1} \tilde{E}) (1+\epsilon_{A}\sqrt{\NC\MC\tau\gamma})^2}\\
\label{eq:c_T_inexact_soras}
&= \frac{\lambda_{min}(E\tilde{E}^{-1})}{(1+\epsilon_{A}\sqrt{\NC\MC\tau\gamma})^2 + \lambda_{min}(E\tilde{E}^{-1})\MC\tau}.
\end{align}
Thus we see that the constant in \eqref{eq:c_T_inexact_soras} is given solely in terms of the constants $\NC$, $\MC$, $\tau$ and $\gamma$, and the minimal and maximal eigenvalues of $E\tilde{E}^{-1}$.

Altogether, the fictitious space lemma provides the following spectral bounds
\begin{align*}
c_T \, a(\mathbf{U},\mathbf{U}) \le a\left( \tilde{M}_{SORAS,2}^{-1} A \mathbf{U}, \mathbf{U} \right) \le c_R \, a(\mathbf{U},\mathbf{U})
\end{align*}
for all $\mathbf{U} \in \HO = \R^{\#{\mathcal N}}$, where $c_R$ and $c_T$ are given by \eqref{eq:c_R_inexact_soras} and \eqref{eq:c_T_inexact_soras} respectively, and
\begin{align*}
\tilde{M}_{SORAS,2}^{-1} &= \widetilde{\RL}_{SORAS,2} \tilde{B}^{-1} \widetilde{\RL}_{SORAS,2}^*\\
&= Z \tilde{E}^{-1} Z^T + (\id-\tilde{P}_0) \sum_{i=1}^N R_i^T \tilde{B}_i^\dag q_i R_i (\id-\tilde{P}_0^T)\\
&= \tilde{P}_0 A^{-1} + (\id-\tilde{P}_0) \sum_{i=1}^N R_i^T (\id-\eta_i) (\id-\xi_{0i}) B_i^{-1} R_i (\id-\tilde{P}_0^T).
\end{align*}
Note that in this case of inexact coarse solves we must retain the projection onto $W_{i,\gamma}$ in the form of the preconditioner.

\subsubsection*{Disclaimer}
This document provides a working draft, which details new theoretical results of interest, but is not yet fully complete in background and scope. As such, it is supplied as a pre-print draft version.

\bibliographystyle{plain}
\bibliography{preprint}

\end{document}